\newcommand{\bbA}{\mathbb{A}}
\newcommand{\bbR}{\mathbb{R}}
\def\Xc{\mathcal{X}}
\def\Ac{\mathcal{A}}
\def\xh{\hat{x}}
\def\ah{\hat{a}}
\def\DelX0{\Delta(X_0)}
\def\delpi0{\delta^{\pi}}
\def\real{\mathbb{R}}
\newcommand{\argmax}{\mathop{\mathrm{argmax}}}
\newcommand{\maxm}{\mathop{\mathrm{maximize}}}
\newcommand{\sbjt}{\mathop{\mathrm{subject\ to}}}
\newtheorem{Theorem}{Theorem}
\newtheorem{Corollary}{Corollary}
\newtheorem{prop}{Proposition}
\newcommand{\comment}[1]{}
\title{\LARGE \bf Guaranteed Bounds for General Approximate Dynamic Programming}
\author{Yajing Liu, Edwin K. P. Chong, Ali Pezeshki, and Bill Moran%
\thanks{This work is supported in part by NSF under Grant CCF-1018472, and by AFOSR under Grant  FA9550-12-1-0418, and by CSU Information Science and Technology Center (ISTeC).}%
\thanks{Y. Liu is with the Department of Electrical and Computer Engineering, Colorado State University, Fort Collins, CO 80523, USA {\tt\small yajing.liu@ymail.com}}%
\thanks{E. K. P. Chong and A. Pezeshki are with the Department of Electrical and Computer Engineering, and the Department of Mathematics, Colorado State University, Fort Collins, CO 80523, USA {\tt\small Edwin.Chong,Ali.Pezeshki@Colostate.Edu}}%
\thanks{B. Moran is with the Department of Electrical and Electronic Engineering, The University of Melbourne, Melbourne, VIC 3010, Australia {\tt\small wmoran@unimelb.edu.au}}%
}
\begin{document}

\maketitle
\thispagestyle{empty}
\pagestyle{empty}

\begin{abstract}

In this paper, we will develop a systematic approach to deriving guaranteed bounds for approximate dynamic programming (ADP) schemes in optimal control problems. Our approach is inspired by our recent results on bounding the performance of greedy strategies in optimization of string-submodular functions over a finite horizon. The approach is to derive a string-submodular optimization problem, for which the optimal strategy is the optimal control solution and the greedy strategy is the ADP solution. Using this approach, we show that any ADP solution achieves a performance that is at least a factor of $\beta$ of the performance of the optimal control solution, which satisfies Bellman's optimality principle. The factor $\beta$ depends on the specific ADP scheme, as we will explicitly characterize. To illustrate the applicability of our bounding technique, we present examples of ADP schemes, including the popular rollout method.

\end{abstract}

\section{Introduction}

In sequential decision making, adaptive sensing, and adaptive control, we are frequently faced with optimally choosing a string (finite sequence) of actions over a finite horizon to maximize an
objective function. However, computing the optimal strategy (optimal sequence of actions) is often difficult. One approach is to use dynamic programming via Bellman's principle for optimality (see, e.g.,
\cite{Bertsekas2000}). However, the computational complexity of this approach grows exponentially with the size of the action space and the decision horizon. Because of this inherent complexity, for years, there has been interest in developing approximation methods for solving dynamic programming problems. Although a wide range of approximate dynamic programming (ADP) methods have been developed (see, e.g., \cite{Pow07}), a general systematic technique to provide performance guarantees for them has remained elusive. In this paper, we will develop a systematic approach to deriving guaranteed bounds for ADP schemes. Our approach is inspired by our recent results in \cite{ZhC13J} and  \cite{ZhW13}) on bounding the performance of greedy strategies in optimization of string-submodular functions.

Submodularity of functions over finite sets plays an important role
in discrete optimization (see, e.g., \cite{nemhauser1978best}, \cite{nemhauser1978analysis}, \cite{conforti1984submodular}, \cite{sviridenko2004note}, \cite{streeter2008online}, \cite{alaei2010maximizing}, \cite{vondrak2008optimal}, \cite{vondrak2010submodularity},
\cite{feige2006approximation}, \cite{feige2010submodular}, \cite{shamaiah2010greedy}, \cite{wang2012}, \cite{ageev2004pipage}, \cite{kulik2009maximizing}, and \cite{lee2010submodular}). It has been shown that, under submodularity, the greedy strategy provides at least a
constant-factor approximation to the optimal strategy. For example,
the celebrated result of Nemhauser \emph{et al.}~\cite{nemhauser1978analysis} states
that for maximizing a monotone submodular function $F$ over a
uniform matroid such that $F(\emptyset)=0$ (here $\emptyset$ denotes
the empty set), the value of the greedy strategy is no less than a
factor $(1-e^{-1})$ of that of the optimal strategy. This is a
powerful result. But a drawback is that submodular functions studied in
most previous papers are defined on the power set of a given finite
set. In contrast, in adaptive control and sensing, we are interested in choosing a string of action sequentially, and the value of the objective function depends on the \emph{order} of
these actions. In consequence, we cannot apply the result of
Nemhauser \emph{et al.}~\cite{nemhauser1978analysis} or its related results on submodularity over finite sets.

To compare the greedy and optimal strategies for functions defined over strings, in \cite{ZhC13J} and \cite{ZhW13}, we have introduced the notion of \emph{string}-submodularity,
which builds on the notion of set-submodularity in combinatorial
optimization. We have shown that, under string-submodularity,
any greedy strategy is suboptimal by a factor of at worst $(1-e^{-1})$, entirely
consistent with the result of Nemhauser \emph{et al.}~\cite{nemhauser1978analysis}.
Our framework also includes characterizing the \emph{curvature} of
string-submodular functions, which roughly corresponds to the
quantitative ``degree'' of submodularity. In fact, there are several
notions of curvature (to be described later). Subject to curvature,
we have derived suboptimality bounds for greedy strategies that are
strictly better than $(1-e^{-1})$. These results represent the
state-of-the-art in bounding greedy strategies in string-submodular
optimization problems.

In this paper, inspired by the bounding techniques in \cite{ZhC13J} and \cite{ZhW13}, we develop the first systematic approach to deriving performance bounds for general ADP methods for optimal control problems. To set up our approach, in Section \ref{sc:II}, we review our string-submodularity results, notions of curvature, and  the corresponding bounds from. In Section \ref{sc:III}, we first describe a general optimal control problem and a class of ADP schemes for approximating optimal control solutions. We then describe our approach to bounding the performance of such ADP schemes. The idea is to define a string-submodular optimization problem for which the optimal strategy is the optimal control solution, and the greedy strategy is the ADP solution. Though, inspired by our previous work, the bounding of ADP schemes is based on a new technique for general string-optimization problems. The results in Section \ref{sc:II}, simply set the stage and terminology for new developments and results that we will present in Section \ref{sc:IV}. We show that any ADP solution achieves a performance that is at least a factor of $\beta$ of the performance of the optimal control solution (satisfying Bellman's optimality principle). The factor $\beta$ depends on the specific ADP scheme, in way that we will explicitly characterize. In Section \ref{sc:V}, we present a few examples of ADP schemes to illustrate the application of our results. In particular, we consider rollout policies which represent a well-studies family of ADP schemes (see, e.g., \cite{BeT97}). Finally, in Section \ref{sc:VI}, we present our concluding remarks.

\section{String-Submodularity and Performance Bounds for Greedy Strategies}\label{sc:II}

In this section, we review our string-submodularity results, notions of curvature, and the corresponding bounds from \cite{ZhC13J} and \cite{ZhW13}. These results show that greedy strategies for optimizing a string-submodular function achieve at least a factor of $\alpha$ of the performance of optimal strategies, which are characterized by Bellman's optimality principle. The factor $\alpha$ depends on the specific objective function to be optimized and its various curvatures, but it is at least $(1-e^{-1})$. The results presented here set the stage, terminology, and the inspiration for our new developments in Section \ref{sc:IV} for bounding the performance of ADP schemes.

\subsection{String-Submodularity and Curvatures}

Let $\bbA$ be a set of possible actions. At each stage $i$, we
choose an action $a_i$ from $\bbA$. Let $A=(a_1,a_2,\ldots,a_k)$
be a \emph{string} of actions taken over $k$ consecutive
stages, where $a_i\in \bbA$ for $i=1,2,\ldots, k$. Let $\bbA^*=\{(a_1,a_2,\ldots,a_k)| \ k=0,1,\ldots \textrm{and} \  a_i\in \bbA,\text{ }  i=1,2\ldots, k\}$ be the set of all possible strings of actions. Note that $k=0$ corresponds to the empty string (no action taken), denoted by $\varnothing$.

For a given string $A=(a_1,a_2,\ldots,a_k)$, we define its \emph{string length} as $k$, denoted $|A|=k$. If $M=(a_1^m,a_2^m,\ldots, a_{k_1}^m)$ and $N=(a_1^n,a_2^n,\ldots, a_{k_2}^n)$ are two strings in $\bbA^*$, we say $M=N$ if $|M|=|N|$ and $a_i^m=a_i^n$ for each $i=1,2,\ldots, |M|$. Moreover, we define string \emph{concatenation} as $M\oplus N= (a_1^m,a_2^m,\ldots, a_{k_1}^m,a_1^n,a_2^n,\ldots, a_{k_2}^n)$. If $M$ and $N$ are two strings in $\bbA^*$, we write $M\preceq N$ if we have $N=M\oplus L,$ for some $L\in \bbA^*$. In other words, $M$ is a \emph{prefix} of $N$.

\noindent\textbf{String Submodularity.} A function from strings to real numbers, $f: \bbA^*\to \bbR$, is \emph{string submodular} if
\begin{itemize}
\item[i.] $f$ has the \emph{forward-monotone} property, i.e., $ \forall M\preceq N \in \bbA^*,$  $f(M)\leq f(N)$.
\item[ii.] $f$ has the \emph{diminishing-return} property, i.e., $\forall M\preceq N \in \bbA^*, \forall a\in \bbA$, $f(M\oplus (a))-f(M) \geq f(N\oplus (a))-f(N)$.
\end{itemize}
We assume, without loss of generality, that  $f(\varnothing)=0$. Otherwise, we can replace $f$ with the marginalized function $f-f(\varnothing)$. From the forward-monotone property, we know that $f(M) \geq 0$ for all $M\in \bbA^*$.

\noindent\textbf{Curvatures.} We define several notions of curvature for $f$ as follows.
\begin{enumerate}
\item \emph{Total backward curvature} of $f$:
\begin{align}
\sigma=\max_{a \in \bbA, M\in \bbA^*}\left\{1-\frac{f( (a)\oplus M)-f( M)}{f((a)) -f(\varnothing)}\right\}.\nonumber
\end{align}
\item \emph{Total backward curvature of $f$ with respect to string $M\in \bbA^*$}:
\begin{align}
\sigma(M)=\max_{N\in \bbA^*, 0<|N|\leq K}\left\{1-\frac{f(N\oplus M)-f(M)}{f(N) -f(\varnothing)}\right\}.\nonumber
\end{align}
\item \emph{Total forward curvature of $f$:}
\begin{align}
\epsilon=\max_{a \in \bbA, M\in \bbA^*}\left\{1-\frac{f( M\oplus (a))-f( M)}{f((a)) -f(\varnothing)}\right\}.\nonumber
\end{align}
\item \emph{Total forward curvature of $f$ with respect to $M$}:
\begin{align}
\epsilon(M)=\max_{N\in \bbA^*, 0<|N|\leq K}\left\{1-\frac{f(M\oplus N)-f(M)}{f(N) -f(\varnothing)}\right\}.\nonumber
\end{align}
\item \emph{Elemental forward curvature of $f$}:
\begin{align}
\eta=\max_{a_i, a_j \in \bbA, M\in \bbA^*}{\frac{f(M\oplus (a_i)\oplus (a_j))-f(M\oplus (a_i))}{f(M\oplus (a_j)) -f(M)}}.\nonumber
\end{align}
\end{enumerate}

\subsection{Performance Bounds for Greedy Strategies}

Consider the problem of finding a string $M\in \bbA^*$, with a length $|M|$ not larger than $K$ (prespecified), to maximize the objective function $f$, that is
\begin{align}\label{eqn:1}
\begin{array}{l}
\text{maximize} \ \    f(M) \\
\text{subject to} \ \ M\in\bbA^*, |M|\leq K.
\end{array}
\end{align}

We define optimal and greedy strategies for \eqref{eqn:1} as follows:
\begin{itemize}
\item[(1)] \emph{Optimal strategy}: Consider the problem \eqref{eqn:1} of finding a string that maximizes $f$ under the constraint that the string length is not larger than $K$. We call a solution of this problem an \emph{optimal strategy} (a term we already have used repeatedly before).
Note that if the function $f$ is forward monotone and there exists an optimal strategy, then there exists one with length $K$.
\item[(2)] \emph{Greedy strategy}:
A string $G_{k}=(g_1,g_2,\ldots,g_{k})$ is called \emph{greedy} if $\forall i=1,2,\ldots,k,$
\begin{align*}
g_i&\in\mathop{\argmax}\limits_{g\in \bbA} f((g_1,g_2,\ldots,g_{i-1},g)),
\end{align*}
where $\argmax$ denotes the set of actions that maximize $f((g_1,g_2,\ldots,g_{i-1},g))$. 
\end{itemize}

Let $I$ be the subset of $\bbA^*$ with maximal string length $K$: $I=\{A\in \bbA^*: |A|\leq K\}.$
We call $I$ a \emph{uniform structure}. Note that the way we define uniform structures is similar to the way independent sets associated with uniform matroids are defined. We now present the relationship between total curvatures and approximation bounds for the greedy strategy.

\begin{Theorem} \cite{ZhC13J} (Greedy approximation bounds involving total curvatures). Consider a string submodular function $f$. Let $O$ be a solution to \eqref{eqn:1}. Then, any greedy string $G_K$ satisfies
\begin{itemize}
\item[(i)]
\begin{align*}
f(G_K) &\geq \frac{1}{\sigma(O)}\left(1-\left(1-\frac{\sigma(O)}{K}\right)^{K}\right)f(O)\\
&> \frac{1}{\sigma(O)}(1-e^{-\sigma(O)})f(O),
 \end{align*}
\item[(ii)]  \[f(G_K) \geq (1-\max_{i=1,\ldots, K-1} \epsilon(G_i)) f(O).\]
\end{itemize}\label{thm1}
\end{Theorem}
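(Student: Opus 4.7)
The plan is to prove both parts by establishing a per-step inequality on the greedy marginal gain $f(G_{i+1})-f(G_i)$ and then either solving a linear recursion (for (i)) or telescoping (for (ii)). The common engine is an upper bound on $f(G_i\oplus O)-f(G_i)$ obtained purely from string submodularity and the greedy rule. Writing $O=(o_1,\ldots,o_K)$, the telescoping identity
\[
f(G_i\oplus O)-f(G_i)=\sum_{j=1}^{K}\bigl[f(G_i\oplus(o_1,\ldots,o_j))-f(G_i\oplus(o_1,\ldots,o_{j-1}))\bigr]
\]
combined with the diminishing-return property applied with prefix $G_i$ bounds each summand by $f(G_i\oplus(o_j))-f(G_i)$, and the greedy choice rule further bounds this by $f(G_{i+1})-f(G_i)$. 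Summing yields the key inequality $f(G_i\oplus O)-f(G_i)\leq K\bigl[f(G_{i+1})-f(G_i)\bigr]$.

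For part (i), I would couple this with the backward-curvature inequality obtained by taking $M=O$ and $N=G_i$ in the definition of $\sigma(O)$, which rearranges to $f(O)\leq f(G_i\oplus O)-(1-\sigma(O))f(G_i)$. Substituting the previous upper bound gives
\[
f(O)\leq \sigma(O)\,f(G_i)+K\bigl[f(G_{i+1})-f(G_i)\bigr],
\]
equivalent to the linear recursion $f(G_{i+1})\geq f(O)/K+(1-\sigma(O)/K)f(G_i)$. Iterating from $f(G_0)=0$ collapses to the geometric identity $f(G_K)\geq \sigma(O)^{-1}\bigl(1-(1-\sigma(O)/K)^{K}\bigr)f(O)$, and the strict bound then follows from the elementary inequality $(1-x/K)^{K}<e^{-x}$ for $x>0$.

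For part (ii), in place of the backward curvature I would apply the forward-curvature definition at $M=G_i$, $N=O$, which reads $f(G_i\oplus O)-f(G_i)\geq(1-\epsilon(G_i))f(O)$. Combined with the common upper bound this yields $f(G_{i+1})-f(G_i)\geq(1-\epsilon(G_i))f(O)/K$, and telescoping over $i=0,\ldots,K-1$, using $\epsilon(\varnothing)=0$ and the fact that the arithmetic mean of the $\epsilon(G_i)$ is at most their maximum, produces the claimed inequality. The main obstacle is the part-(i) step: one must choose $N=G_i$ (the \emph{growing} greedy prefix) in the backward-curvature definition so that the $\sigma(O)f(G_i)$ term lines up with the $f(G_i)$ coming from the submodularity/greedy upper bound and yields a clean recursion with the constant coefficient $1-\sigma(O)/K$. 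Other natural moves, such as applying the curvature inequality only once with $N=G_K$, produce merely $f(O)\leq(1+\sigma(O))f(G_K)$, which is weaker than the iterated bound and misses the $1-e^{-\sigma(O)}$ exponent entirely.
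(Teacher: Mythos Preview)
The paper does not contain its own proof of this theorem: Theorem~1 is quoted from \cite{ZhC13J} as part of the background review in Section~\ref{sc:II}, with no accompanying argument. So there is nothing in the present paper to compare your proposal against.

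That said, your proof is correct and follows the standard route for such results in the string/set-submodularity literature. The common engine
\[
f(G_i\oplus O)-f(G_i)\le K\bigl[f(G_{i+1})-f(G_i)\bigr]
\]
is derived exactly as you indicate, and feeding it into either the backward-curvature inequality (yielding the linear recursion and geometric sum for part~(i)) or the forward-curvature inequality (yielding the telescoping sum for part~(ii)) is precisely the intended mechanism. Your observation that $\epsilon(G_0)=\epsilon(\varnothing)=0$ is what reconciles the $K$-term telescope with the $(K{-}1)$-term maximum in the stated bound; in fact your average-versus-max step yields the slightly sharper intermediate bound $f(G_K)\ge\bigl(1-\tfrac{K-1}{K}\max_i\epsilon(G_i)\bigr)f(O)$. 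One minor technicality: the definition of $\sigma(O)$ restricts the test string $N$ to $0<|N|\le K$, so it cannot be invoked at $N=G_0=\varnothing$; but at $i=0$ the term $\sigma(O)f(G_0)$ vanishes anyway and the recursion's base case $f(O)\le Kf(G_1)$ follows directly from the common engine, so the iteration starts cleanly.
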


Under the framework of maximizing submodular set functions, similar results are reported in \cite{conforti1984submodular}. However, the forward and backward algebraic structures are not exposed in \cite{conforti1984submodular} because the total curvature there does not depend on the order of the elements in a set. In the setting of maximizing string submodular functions, the above theorem exposes the roles of forward and backward
algebraic structures in bounding the greedy strategy.

The results in Theorem 1 imply that for a string submodular function, we have $\sigma(O) \geq 0$. Otherwise, part (i) of Theorem~1 would imply that $f(G_K)\geq f(O)$, which is absurd. Moreover, recall that if the function is backward monotone, then $\sigma(O)\leq \sigma \leq 1$ and we have the following result.

\begin{Corollary} \cite{ZhC13J} (Universal greedy approximation bounds involving total curvatures). Suppose that $f$ is string-submodular and backward monotone. Then,
\begin{itemize}
\item[(i)]
\begin{align*}
f(G_K) & \geq \frac{1}{\sigma}\left(1-\left(1-\frac{\sigma}{K}\right)^{K}\right)f(O) \\
& > \frac{1}{\sigma}(1-e^{-\sigma})f(O),
 \end{align*}
\item[(ii)]
\[f(G_K) \geq (1- \epsilon) f(O).\]
\end{itemize}\label{cor1}
\end{Corollary}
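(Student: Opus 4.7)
The plan is to deduce both parts of the corollary as immediate consequences of Theorem~\ref{thm1}, using the observations stated just before the corollary that backward monotonicity forces $\sigma(O) \leq \sigma \leq 1$ (and, by an analogous argument, $\epsilon(G_i) \leq \epsilon$ for every $i = 1, \ldots, K-1$).

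For part (i), Theorem~\ref{thm1}(i) already gives $f(G_K) \geq \phi(\sigma(O))\, f(O)$ with
\[
\phi(x) = \frac{1}{x}\left(1 - \left(1-\frac{x}{K}\right)^{K}\right).
\]
The substantive step is to show that $\phi$ is non-increasing on $(0,1]$. I would handle this by applying the geometric-series identity $1 - r^{K} = (1-r)\sum_{j=0}^{K-1} r^{j}$ with $r = 1 - x/K$, which rewrites $\phi$ as
\[
\phi(x) = \frac{1}{K}\sum_{j=0}^{K-1}\left(1 - \frac{x}{K}\right)^{j},
\]
an average of manifestly non-increasing functions of $x$ on $(0,1]$. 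Combined with $\sigma(O) \leq \sigma$, this yields $\phi(\sigma(O)) \geq \phi(\sigma)$, giving the first inequality of (i). The strict inequality $\phi(\sigma) > (1 - e^{-\sigma})/\sigma$ then follows from the elementary bound $(1 - \sigma/K)^{K} < e^{-\sigma}$, valid for $\sigma \in (0,1]$ and $K \geq 1$.

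For part (ii), Theorem~\ref{thm1}(ii) gives $f(G_K) \geq \bigl(1 - \max_{i=1,\ldots,K-1} \epsilon(G_i)\bigr) f(O)$. Combining with $\max_i \epsilon(G_i) \leq \epsilon$ yields the stated bound at once.

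The main obstacle is essentially only the monotonicity of $\phi$; the geometric-series rewriting trivializes it, whereas a direct attempt via differentiation would be considerably messier. The only other subtlety is the passage between curvatures measured over single actions (as in $\sigma$ and $\epsilon$) and curvatures measured over general strings (as in $\sigma(O)$ and $\epsilon(G_i)$); I would invoke the telescoping argument already established in \cite{ZhC13J}, which chains single-action diminishing-returns inequalities into string-level inequalities, to justify $\sigma(O) \leq \sigma$ and $\epsilon(G_i) \leq \epsilon$ under backward monotonicity.
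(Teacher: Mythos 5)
Your proposal is correct and follows essentially the same route as the paper, which (citing \cite{ZhC13J}) obtains the corollary from Theorem~\ref{thm1} together with the observation that backward monotonicity gives $\sigma(O)\leq\sigma\leq 1$ (and likewise $\epsilon(G_i)\leq\epsilon$). Your geometric-series rewriting of $\frac{1}{x}\bigl(1-\bigl(1-\frac{x}{K}\bigr)^{K}\bigr)$ as $\frac{1}{K}\sum_{j=0}^{K-1}\bigl(1-\frac{x}{K}\bigr)^{j}$ is a clean way to make explicit the monotonicity step that the paper leaves implicit.
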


Note that the bounds $\frac{1}{\sigma}(1-e^{-\sigma})$ and $(1-\epsilon)$ are independent of the length constraint $K$. Therefore, the above bounds are universal lower bounds for the greedy strategy for all possible length constraints.  Part (i) of Corollary~\ref{cor1} implies that in the backward monotone case, where $\sigma\le 1$, any greedy string $G_K$ satisfies the universal bound $f(G_K) > (1-e^{-1})f(O)$.

\begin{Theorem} \cite{ZhC13J} (Greedy approximation bounds involving elemental curvature). Consider a forward-monotone function $f$ with elemental forward curvature $\eta$. Let $O$ be an optimal solution to \eqref{eqn:1}. Suppose that $f(G_{i}\oplus O) \geq f(O)$ for $i=1,2,\ldots, K-1$. Then, any greedy string $G_K$ satisfies
\[
f(G_K) \geq  f(O)\left(1-(1-\frac{1}{K_{\eta}})^{K}\right),
\]
where $K_{\eta}= ({1-\eta^{K}})/({1-\eta})$ if $\eta\neq 1$ and $K_{\eta}=K$ if $\eta=1$.
\label{thm2}
\end{Theorem}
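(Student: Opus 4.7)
The plan is to follow the standard ``recursion on $f(O)-f(G_i)$'' template used throughout submodular-greedy analyses, but with the diminishing-returns factor $\eta$ replacing $1$ in the key inequality. Write $O=(o_1,\ldots,o_K)$ for the optimal string and, for fixed $i$, set $T_{j}=G_i\oplus(o_1,\ldots,o_{j-1})$ so that $T_1=G_i$ and $T_{K+1}=G_i\oplus O$. The objective of the first part is to show the one-step recursion
\begin{equation*}
f(O)-f(G_{i+1})\leq\Bigl(1-\frac{1}{K_{\eta}}\Bigr)\bigl(f(O)-f(G_i)\bigr),
\end{equation*}
from which the stated bound follows by iterating $K$ times starting from $f(G_0)=f(\varnothing)=0$.

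The core step is to bound the telescoping sum
\begin{equation*}
f(G_i\oplus O)-f(G_i)=\sum_{j=1}^{K}\bigl[f(T_j\oplus(o_j))-f(T_j)\bigr].
\end{equation*}
For each $j$ I will ``peel off'' the $j-1$ elements $o_1,\ldots,o_{j-1}$ sitting between $G_i$ and $o_j$ by applying the defining inequality for elemental forward curvature $j-1$ times. Each application replaces a term of the form $f(M\oplus(a_i)\oplus(o_j))-f(M\oplus(a_i))$ by $\eta$ times $f(M\oplus(o_j))-f(M)$, yielding
\begin{equation*}
f(T_j\oplus(o_j))-f(T_j)\leq\eta^{j-1}\bigl[f(G_i\oplus(o_j))-f(G_i)\bigr].
\end{equation*}
Next I use the defining property of the greedy choice, namely $g_{i+1}\in\argmax_{a\in\bbA}f(G_i\oplus(a))$, to conclude that $f(G_i\oplus(o_j))-f(G_i)\leq f(G_{i+1})-f(G_i)$ for every $j$. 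Summing over $j=1,\ldots,K$ gives
\begin{equation*}
f(G_i\oplus O)-f(G_i)\leq\Bigl(\sum_{j=1}^{K}\eta^{j-1}\Bigr)\bigl[f(G_{i+1})-f(G_i)\bigr]=K_{\eta}\bigl[f(G_{i+1})-f(G_i)\bigr].
\end{equation*}
The hypothesis $f(G_i\oplus O)\geq f(O)$ (for $i=1,\ldots,K-1$, and trivially for $i=0$ by optimality of $O$) replaces the left side by $f(O)-f(G_i)$, giving the desired one-step recursion.

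Finally, unrolling the recursion from $i=K-1$ down to $i=0$ gives $f(O)-f(G_K)\leq(1-1/K_{\eta})^{K}f(O)$, which rearranges to the claimed bound; the two cases $\eta\neq 1$ and $\eta=1$ appear only through the closed form for the geometric sum $K_{\eta}=\sum_{j=0}^{K-1}\eta^{j}$. The main technical obstacle I expect is step two: verifying that the elemental curvature inequality can legitimately be iterated $j-1$ times with the correct string structure, since the curvature bound is stated for an arbitrary prefix $M$ and two actions $a_i,a_j$, and one must apply it with the ``moving element'' $o_j$ placed at the end while the prefix shortens one action at a time. Once that iteration is justified, the rest is bookkeeping: the greedy-maximization inequality and the telescoping/recursion are both routine.
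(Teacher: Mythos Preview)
The paper does not contain its own proof of this theorem: it is quoted from \cite{ZhC13J} as part of the review in Section~\ref{sc:II}, and no argument is supplied here. Your proposal is correct and is precisely the standard argument one expects in \cite{ZhC13J}: telescope $f(G_i\oplus O)-f(G_i)$, peel each summand down to $f(G_i\oplus(o_j))-f(G_i)$ via $j-1$ applications of the elemental-curvature inequality (each application taking $M=G_i\oplus(o_1,\ldots,o_{\ell-1})$, $a_i=o_\ell$, $a_j=o_j$ for $\ell=j-1,\ldots,1$), invoke greediness, and iterate the resulting recursion. The one point you flag as the ``main technical obstacle'' is in fact routine once written out as above; the only implicit assumption is that the denominators $f(M\oplus(o_j))-f(M)$ are strictly positive so that the defining ratio for $\eta$ yields the desired inequality, which is a mild nondegeneracy condition typically taken for granted in this literature.
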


Recall that $\eta$ does not depend on the length constraint $K$. Therefore, the lower bound using $K_{\eta}$ is a universal lower bound for the greedy strategy. Now suppose that $f$ is string submodular. Then, we have $\eta \leq 1$. Because $1-(1-\frac{1}{K_{\eta}})^{K}$ is decreasing as a function of $\eta$, an immediate consequence of Theorem~\ref{thm2} is that any greedy string $G_K$ satisfies the universal bound $f(G_K) > (1-e^{-1})f(O)$.

\subsection{Other Results}

In the previous section, we considered the case where $I$ is a uniform structure. In \cite{ZhC13J} and \cite{ZhW13}, we have also studied the case where $I$ is a \emph{non-uniform} structure, by introducing the notion of string-matroid, and have derived bounds that quantify the performance of greedy strategies relative to optimal strategies in terms of various curvatures of the objective function. We leave these results out for the sake of brevity and refer the reader to \cite{ZhC13J} and \cite{ZhW13} for details.

A number of other researchers (see \cite{streeter2008online}, \cite{alaei2010maximizing}, and \cite{golovin2011adaptive}) have also considered bounding the performance of greedy strategies using extensions of set submodularity to string-submodularity. In particular, Streeter and Golovin~\cite{streeter2008online} showed that if the function $f$ is \emph{forward} and \emph{backward} monotone: $f(M\oplus N) \geq f(M)$ and $f(M\oplus N) \geq f(N)$ for all $M,N\in \bbA^*$, and $f$ has the diminishing-return property: $f(M\oplus (a))-f(M)\geq f(N\oplus (a))-f(N)$ for all $a\in \bbA$, $M,N\in \bbA^*$ such that $M$ is a prefix of $N$, then the greedy strategy achieves at least a $(1-e^{-1})$-approximation of the optimal strategy. However, the notions of string submodularity and various curvature that we have introduced in our recent work \cite{ZhC13J}, \cite{ZhW13} provide us with weaker sufficient conditions under which the greedy strategy still achieves at least a $(1-e^{-1})$-approximation of the optimal strategy.

\section{Bounding ADP Schemes in Optimal Control}\label{sc:III}

In this section, we first describe a general optimal control problem and a class of ADP schemes for approximating optimal control solutions. We then describe our approach to bounding the performance of such ADP schemes.

\subsection{General Optimal Control Problems}

To begin our formulation of a general optimal control problem, let $\Xc$ denote a set of states and $\Ac$ a set of control actions.
Given $x_1\in\Xc$ and functions $h_k:\Xc\times\Ac\to\Xc$ and $r_k:\Xc\times\Ac\to\real_+$ for $k=1,\ldots,K$, consider the optimal control problem
\begin{equation}
\begin{aligned}
\maxm_{a_1,\ldots,a_K\in\Ac} &\quad  \sum_{k=1}^K r_k(x_k,a_k) \\
\sbjt &\quad x_{k+1} = h_k(x_k,a_k),\ k=1,\ldots,K-1.
\end{aligned}
\label{eqn:2}
\end{equation}

Think of $a_k$ as the \emph{control action} applied at time $k$ and $x_k$ the
\emph{state} visited at time $k$. The real number $r_k(x_k,a_k)$ is the \emph{reward} accrued at time $k$ by applying $a_k$ at state $x_k$. This form of optimal control problem covers a wide variety of optimization problems found in many areas, ranging from engineering to economics. In particular, many adaptive sensing problems have this form (see, e.g., \cite{LiC12}).

The solution to the optimal control problem above is characterized by Bellman's principle of dynamic programming. To explain, for each $k=1,\ldots,K$, define functions
$V_k:\Xc\times\Ac^{K-k+1}\to\real_+$ by
\[
V_k(x_k,(a_k,\ldots,a_K)) = \sum_{i=k}^K r_i(x_i,a_i)
\]
where $x_{i+1} = h_i(x_i,a_i)$, $i=k,\ldots,K-1$.
The optimal control problem can be written as
\begin{align*}
\maxm_{a_1,\ldots,a_K\in\Ac} &\quad  V_1(x_1,(a_1,\ldots,a_K)) \\
\sbjt &\quad x_{k+1} = h_k(x_k,a_k),\ k=1,\ldots,K-1,
\end{align*}
wher $x_1$ is given. Let $o_1,\ldots,o_K$ be an optimal solution to this problem, and given $x_1$, define $x_1^*=x_1$ and $x_{k+1}^*=h_k(x_k^*,o_k)$, $k=1,\ldots,K-1$. This is the sequence of states visited as a result of the optimal control actions $o_1,\ldots,o_K$. Then, Bellman's principle states that for
$k=1,\ldots,K$, we have
\begin{equation}
\begin{array}{lr}
V_k(x_k^*,(o_k,\ldots,o_K))=&\\
 \mathop{\max}\limits_{a\in\Ac}
\{r_k(x_k^*,a) + V_{k+1}(h_k(x_k^*,a),(o_{k+1},\ldots,o_K))\},&\\
o_k \in & \\
\mathop{\argmax}\limits_{a\in\Ac}
\{r_k(x_k^*,a) + V_{k+1}(h_k(x_k^*,a),(o_{k+1},\ldots,o_K))\}, &
\end{array}\label{eqn:3}
\end{equation}
with the convention that $V_{K+1}(\cdot)\equiv 0$. Moreover, any sequence of control actions satisfying (\ref{eqn:3}) above is optimal. The term $V_{k+1}(h_k(x_k^*,a),(o_{k+1},\ldots,o_K))$ is called the \emph{value-to-go} (VTG).

Bellman's principle provides a method to compute an optimal solution: We use (\ref{eqn:3}) to iterate backwards over the time indices $k=K,K-1,\ldots,1$, keeping the states as variables, working all the way back to $k=1$. This is the familiar \emph{dynamic programming algorithm}. However, the procedure suffers from the \emph{curse of dimensionality} and is therefore impractical for many problems of interest: merely storing the iterates $V_k(\cdot, (o_k,\ldots,o_K))$ requires an exponential amount of memory.  Therefore, designing computationally tractable approximation methods remains a topic of active research.

\subsection{ADP Schemes}

A well-studied class of \emph{approximate dynamic programming} (ADP) approaches rests on approximating the VTG $V_{k+1}(h_k(x_k^*,a),(o_{k+1},\ldots,o_K))$  by some other term $W_{k+1}(\xh_k,a)$. In this method, we start at time $k=1$, at state $\xh_1=x_1$, and for each $k=1,\ldots,K$, we compute the control action and state using
\begin{equation}
\begin{array}{rl}
\ah_k \hspace{-.2cm}& \in \mathop{\argmax}\limits_{a\in\Ac} \{r_k(\xh_k,a) + W_{k+1}(\xh_k,a)\}, \\
\xh_{k+1} \hspace{-.2cm}& =  h_k(\xh_k,\ah_k).
\end{array}\label{eqn:4}
\end{equation}
The VTG approximation $W_{k+1}(\xh_k,a)$ can be based on a number of methods, ranging from heuristics to reinforcement learning \cite{SuB98} to rollout \cite{BeT97}.

A natural question is ``what is the performance of the ADP approach above relative to the optimal solution?'' The answer, of course, depends on the specific VTG approximation. If the VTG approximation is equal to the true VTG, then the procedure above generates an optimal solution. In general, the procedure produces something suboptimal. But how suboptimal? This question has alluded general treatment but has remained an issue of great interest to designers and users of ADP methods. In the following section, we develop a systematic approach to answering this fundamental
question.

\subsection{Deriving Performance Bounds for ADP Schemes}

We now describe our approach to bounding the performance of such ADP schemes. The idea is to define a string-submodular optimization problem for which the optimal strategy is the optimal control solution, and the greedy strategy is the ADP solution. Though inspired by our previous work (reviewed in Section \ref{sc:II}), the bounding of ADP schemes is based on a new technique for general string-optimization problems.

To see how our approach works, let $\Ac_k$ be the set of all strings of symbols in $\Ac$ with length not exceeding $k$. Define the function $f:\Ac_K\to\mathbb{R}_+$ by
$f(\varnothing)=0$ and
\[f((a_1,a_2,\ldots,a_k)) = \sum_{i=1}^k r_i(x_i,a_i) + W_{k+1}(x_k,a_k),
\]
for $k=1,\ldots,K$, where $x_{k+1}=h_k(x_k,a_k)$ as before and $W_{K+1}(\cdot)\equiv 0$
by convention. Using this string function $f$, we can now define the
optimization problem of finding a string $(a_1,\cdots, a_K)$ to maximize
$f((a_1,\ldots,a_K))$. This is an instance of the string-optimization problem described earlier.

It is clear that $f((a_1,\ldots,a_K)) = \sum_{i=1}^K r_i(x_i,a_i)$, which is the objective function in (\ref{eqn:2}). Hence, the string-optimization problem defined above is equivalent to the optimal control problem (\ref{eqn:2}). Next, notice that a greedy scheme by definition has the following form, given $(g_1,\ldots,g_{k-1})$:
\begin{equation}
\begin{aligned}
g_k &\in \mathop{\argmax}\limits_{g\in\Ac} f((g_1,\ldots,g_{k-1},g)) \\
&\in \argmax_{g\in\Ac}\{\sum_{i=1}^{k-1} r_i(x_i,g_i) + r_k(x_k,g) + W_{k+1}(x_k,g)\}\\
&\in \argmax_{g\in\Ac} \{r_k(x_k,g) + W_{k+1}(x_k,g)\}.
\end{aligned}
\label{eqn:5}
\end{equation}
This is simply the ADP scheme in (\ref{eqn:4}). Hence, we have the following result.
\begin{prop}
The ADP scheme in (\ref{eqn:4}) is a greedy strategy for the string-optimization problem
\begin{align}
\begin{array}{l}
\text{maximize} \ \    f((a_1,a_2,\ldots,a_K)) \\
\text{subject to} \ \ (a_1,a_2,\ldots,a_K)\in\Ac_K.
\end{array}
\end{align}

\end{prop}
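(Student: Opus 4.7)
The plan is to unfold the greedy-strategy definition from Section~\ref{sc:II} using the explicit form of $f$ just introduced, and verify that the resulting recursion coincides term-by-term with the ADP recursion \eqref{eqn:4}. Because the proposition is purely an identification statement, no curvature or submodularity assumption enters the argument; everything hinges on two observations, namely (a) that $f$ is defined as a running sum of past rewards plus a terminal VTG approximation, and (b) that the past rewards are constants with respect to the current action choice.

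First I would invoke the greedy-strategy definition: a greedy string $(g_1,\ldots,g_K)\in\Ac_K$ must satisfy, for each $k=1,\ldots,K$,
\[
g_k \in \argmax_{g\in\Ac} f((g_1,\ldots,g_{k-1},g)).
\]
Substituting the definition of $f$ together with the associated state recursion $x_1$ given and $x_{i+1}=h_i(x_i,g_i)$ yields
\[
f((g_1,\ldots,g_{k-1},g)) = \sum_{i=1}^{k-1} r_i(x_i,g_i) + r_k(x_k,g) + W_{k+1}(x_k,g).
\]
The first sum does not depend on $g$ and therefore drops out of the $\argmax$, leaving
\[
g_k \in \argmax_{g\in\Ac} \{r_k(x_k,g)+W_{k+1}(x_k,g)\},
\]
which is precisely the rule that defines $\ah_k$ in \eqref{eqn:4}.

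Next I would verify by a short induction on $k$ that the state sequence $x_k$ generated while evaluating $f$ along the greedy string coincides with the ADP state sequence $\xh_k$. The base case $x_1=\xh_1$ is immediate because both procedures start from the same given initial state. For the inductive step, assuming $x_k=\xh_k$, the two $\argmax$ sets agree, so one may choose $g_k=\ah_k$; the common update rule $h_k$ then gives $x_{k+1}=h_k(x_k,g_k)=h_k(\xh_k,\ah_k)=\xh_{k+1}$. This establishes $(g_1,\ldots,g_K)=(\ah_1,\ldots,\ah_K)$, which is the content of the proposition.

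The only subtlety---and hence the ``hard part,'' though it is mild---is to reconcile the implicit definition of the states inside $f$ with the explicit state recursion used in the ADP. The synchronized induction above handles this by ensuring that the $x_k$ entering $f$ at the $k$-th greedy step is the very $\xh_k$ produced by the ADP at the same step. Once that alignment is in place, the equivalence of the two strategies is immediate.
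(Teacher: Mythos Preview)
Your argument is correct and follows the same approach as the paper: substitute the definition of $f$ into the greedy rule, drop the past-reward sum because it is constant in $g$, and identify the resulting $\argmax$ with \eqref{eqn:4}. The paper does exactly this in the display preceding the proposition (equation \eqref{eqn:5}); your additional induction aligning the state sequences $x_k$ and $\xh_k$ makes explicit a point the paper leaves implicit, but the method is the same.
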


Using this proposition, we can show that any ADP solution achieves a performance that is at least a factor of $\beta$ of the performance of the optimal control solution (satisfying Bellman's optimality principle). The factor $\beta$ depends on the specific ADP scheme as we will explicitly show in the next section.


\section{Main Results}\label{sc:IV}

\subsection{General Bound}

In the last section, we introduced a general optimal control problem and an associated class of ADP schemes. We then formulated a string-optimization problem associated with a given optimal control problem and ADP scheme with the property that any optimal strategy for the string-optimization problem is an optimal control solution and any greedy strategy is the ADP solution. This allows us to use bounding methods for greedy strategies for string-optimization to derive bounds for ADP methods. However, it turns out that the results in Section~\ref{sc:II} do not directly apply to the string-optimization problem we formulated in Section~\ref{sc:III}. More specifically, the function $f$ in Section~\ref{sc:III} is defined only on $\Ac_K$ (i.e., strings of length at most $K$), whereas the results in Section~\ref{sc:II} require $f$ to be defined on strings of length greater than $K$. To address this issue, we now present a \emph{new} result for bounding greedy strategies for string-optimization problems.

Let $f:\Ac_K\to\real_+$ be an objective function. Consider the optimization problem
\begin{equation}
\label{eqn:max}
\text{maximize  }  f(S) \ \   \text{subject to} \ \ S\in\Ac_K, |S|=K.
\end{equation}
Let $O_K=(o_1,\ldots,o_K)$ be optimal for (\ref{eqn:max}). 
Let $G_K=(g_1, \ldots, g_K)$ be a greedy strategy for (\ref{eqn:max}), defined as before: given $g_1, \cdots, g_{k-1}$, 
\begin{equation}
\label{eqn:greedy}
g_k\in\mathop{\argmax}_{g\in \mathbb{A}} f((g_1,\cdots, g_{k-1}, g)).
\end{equation}
As before, write $G_0=O_0=\varnothing$ and
for $k=1,\ldots,K$, 
$G_k=(g_1,\ldots,g_k)$ and $O_k=(o_1,\ldots,o_k)$.

Inspired by the results in Section~\ref{sc:II}, define the 
\emph{forward curvature of $f$ with respect to $G_k$} by 
\begin{equation}
\epsilon_k=1-\frac{f(G_{k+1})-f(G_k)}{f((g_1))-f(\varnothing)}, \quad 0\leq k\leq K-1.
\label{eqn:cur1}
\end{equation}
Notice that $\epsilon_0 = 0$.
Next, define the \emph{elemental forward curvature of $f$ with respect to $O_k$} by 
\begin{equation}
\eta_k=\frac{f(O_{k+1})-f(O_{k})}{f(o_{k+1})-f(\varnothing)}, \quad  0\leq k\leq K-1.
\label{eqn:cur2}
\end{equation}
Notice that $\eta_0 = 1$.
We now present a result that bounds $f(G_K)$ relative to $f(O_K)$, using the definitions above.

\begin{Theorem}
\label{thm:bounds}
The following bound holds:
$f(G_K)\geq \beta f(O_K)$, where
\[
\beta = \frac{\sum_{i=0}^{K-1}(1-\epsilon_i)}{\sum_{i=0}^{K-1}\eta_i}.
\]
\end{Theorem}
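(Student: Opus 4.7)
The plan is to telescope both $f(G_K)$ and $f(O_K)$ using the definitions of $\epsilon_k$ and $\eta_k$, and then exploit the fact that $g_1$ is greedy to compare the two sums on a common denominator.

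First I would rewrite the curvature definitions as one-step increment formulas. From \eqref{eqn:cur1}, and using $f(\varnothing)=0$,
\[
f(G_{k+1})-f(G_k)=(1-\epsilon_k)\,f((g_1)),\qquad k=0,\ldots,K-1,
\]
so that telescoping gives the closed form
\[
f(G_K)=\sum_{k=0}^{K-1}\bigl[f(G_{k+1})-f(G_k)\bigr]=f((g_1))\sum_{k=0}^{K-1}(1-\epsilon_k).
\]
Likewise, from \eqref{eqn:cur2},
\[
f(O_{k+1})-f(O_k)=\eta_k\bigl(f((o_{k+1}))-f(\varnothing)\bigr)=\eta_k\,f((o_{k+1})),
\]
and telescoping gives
\[
f(O_K)=\sum_{k=0}^{K-1}\eta_k\,f((o_{k+1})).
\]

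Next I would invoke the greedy definition \eqref{eqn:greedy}. At the first step, $g_1\in\argmax_{g\in\mathbb{A}}f((g))$, so in particular $f((g_1))\geq f((o_{k+1}))$ for every $k=0,\ldots,K-1$. Assuming $\eta_k\geq 0$ (which is natural in the ADP setting, where $f$ is nonnegative and forward-monotone so each incremental term $f(O_{k+1})-f(O_k)$ is nonnegative), we may multiply by $\eta_k$ and sum to obtain
\[
f(O_K)=\sum_{k=0}^{K-1}\eta_k\,f((o_{k+1}))\;\leq\;f((g_1))\sum_{k=0}^{K-1}\eta_k.
\]

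Finally, combining the two displays yields
\[
\frac{f(G_K)}{f(O_K)}\;\geq\;\frac{f((g_1))\sum_{k=0}^{K-1}(1-\epsilon_k)}{f((g_1))\sum_{k=0}^{K-1}\eta_k}\;=\;\beta,
\]
which is the claim. The argument is essentially a one-line telescoping identity on each side plus a single greedy comparison at stage one; there is no genuine obstacle, but the subtle point to be careful about is the sign of the $\eta_k$'s and the implicit normalization $f(\varnothing)=0$, which together make the division-by-common-factor step valid. If one wants to dispense with the nonnegativity of $\eta_k$, one can replace the pointwise bound by the weaker but still valid estimate $\eta_k f((o_{k+1}))\leq \eta_k f((g_1))$ whenever $\eta_k\geq 0$ and handle negative-$\eta_k$ terms separately (they only help the inequality since $(1-\epsilon_k)\geq 0$ is guaranteed on the greedy side by monotonicity of $f(G_k)$ in $k$).
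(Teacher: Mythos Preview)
Your proof is correct and follows essentially the same approach as the paper: telescope $f(G_K)$ via the $\epsilon_k$ identities, telescope $f(O_K)$ via the $\eta_k$ identities, and then use the first-step greedy inequality $f((g_1))\geq f((o_{k+1}))$ to compare. Your explicit flagging of the implicit assumption $\eta_k\geq 0$ is a valid caveat (the paper's proof also silently uses it), though your parenthetical about negative $\eta_k$ ``only helping'' is not quite right and should be dropped.
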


\begin{proof}
Using the definition of the forward curvature of $f$ with respect to $G_k$, we have
\begin{align*}
f(G_2)-f(G_1) &= (1-\epsilon_1)f(G_1),\\
f(G_3)-f(G_2) &= (1-\epsilon_2)f(G_1),\\
&\vdots\\
f(G_k)-f(G_{k-1}) &= (1-\epsilon_{k-1})f(G_1).\\
&\vdots\\
f(G_K)-f(G_{K-1}) &= (1-\epsilon_{K-1})f(G_1),
\end{align*}
which leads to
\begin{equation}
f(G_K)=\sum_{i=0}^{K-1}(1-\epsilon_i) f(G_1).
\label{eqn:GK1}
\end{equation}
By the definition of elemental forward curvature of $f$ with respect to $O_k$, we have 
\begin{align}
f(O_K)
&=\sum_{i=1}^{K}(f((o_1,\cdots,o_i))-f( (o_1,\cdots,o_{i-1})))\nonumber\\
&=\eta_0f(o_1)+\eta_1 f(o_2)+\cdots+\eta _{K-1}f(o_K)\nonumber\\
&\leq\sum_{i=0}^{K-1}\eta_i f(G_1).\nonumber
\end{align}
where $f(G_1)\ge f(a)$ for any $a\in\Ac$ by \eqref{eqn:greedy}. Therefore, 
\begin{equation}
 f(G_1)\geq  \frac{1}{\sum_{i=0}^{K-1}\eta_i}f(O_K).
\label{eqn:G1O}
\end{equation}
Combining (\ref{eqn:GK1}) and (\ref{eqn:G1O}), we get 
\[
f(G_K) \geq \frac{\sum_{i=0}^{K-1}(1-\epsilon_i)}{\sum_{i=0}^{K-1}\eta_i}f(O_K)
\]
as desired.
\end{proof}

\emph{Remarks:}
\begin{itemize}
\item[1.] Notice that the bound above holds without any assumption on the monotonicity of $f$. However, the bound is only meaningful if $\beta\geq 0$. A sufficient condition for this is the monotonicity of $f$. More precisely, if $f$ is forward monotone with respect to $G_k$, then $\epsilon_k\leq 1$ for each $k$, and $\epsilon_0=0$, in which case $\beta> 0$.
\item[2.] It is easy to check that if 
\[
\sum_{i=0}^{K-1} \epsilon_i + \eta_i \leq K,
\]
then $f(G_K) = f(O_K)$; i.e., the greedy strategy is optimal.
\end{itemize}

\subsection{Bounding ADP Schemes}

We can now apply the result of Theorem~\ref{thm:bounds} to the function $f$ defined in Section~\ref{sc:III}. Doing so will provide bounds on general ADP schemes relative to optimal control solutions. To begin, recall that
\[
f((a_1,\ldots,a_k)) = \sum_{i=1}^k r_i(x_i,a_i) + W_{k+1}(x_k,a_k).
\]
Assume without loss of generality that $f$ is a nonnegative function (for otherwise, we can simply add a constant to each $W_{k+1}$ term).
For this form of $f$, we have
\begin{align*}
&\epsilon_k=1-\\
&\frac{r_{k+1}(x_{k+1}, g_{k+1})+W_{k+2}(x_{k+1}, g_{k+1})-W_{k+1}(x_k, g_k)}{r_1(x_1, g_1)+W_2(x_1,g_1)},
\end{align*}
and 
\begin{align*}
&\eta_k=\\
&\frac{r_{k+1}(x_{k+1}, o_{k+1})+W_{k+2}(x_{k+1}, o_{k+1})-W_{k+1}(x_k, o_k)}{r_1(x_1, o_{k+1})+W_2(x_1, o_{k+1})}
\end{align*}
Hence, applying Theorem~\ref{thm:bounds}, we have that for the ADP scheme $G_K$, $f(G_K)\geq\beta f(O_K)$ where $\beta$ is related to the above $\epsilon_k$ and $\eta_k$ as given in Theorem~\ref{thm:bounds}. In the next section, we provide some examples of special cases to illustrate this bound.

\section{Examples}\label{sc:V}

\subsection{Rollout}

For the remainder of the paper, assume that $x_1$ is a given state.
Suppose that $\pi_b:\Xc\to\Ac$ is a given policy. Consider the associated ADP where
$W_{k+1}(x_k,g)=\sum_{i=k+1}^K r_i({x}_i,\pi_b({x}_i))$, where ${x}_{k+1}=h_k({x}_k,g)$ and ${x}_{i+1}=h_k({x}_i, \pi_b({x}_i))$ for $ k+1\leq i\leq K-1$.
This ADP method is called \emph{rollout} \cite{BeT97}; the policy $\pi_b$ is called the \emph{base policy}. For rollout, we have
\begin{equation}
\epsilon_k=1-\frac{r_{k+1}(x_{k+1}, g_{k+1})+R_1-R_2}{r_1(x_1, g_1)+\sum_{i=2}^K r_i(\tilde{x}_i, \pi_b(\tilde{x}_i))},
\label{eqn:Repsilon}
\end{equation}
where
\begin{align*}
R_1 &= \sum_{i=k+2}^K r_i(x_i, \pi_b(x_i)), \\
R_2 &= \sum_{i=k+1}^K r_i(\hat{x}_i, \pi_b(\hat{x}_i)), \\
x_{i+1} &= h_i(x_i, g_i) \ \text{for}\ 1\leq i\leq k+1,\\
x_{i+1} &= h_i(x_i, \pi_b(x_i))\ \text{for}\  k+2\leq i\leq K-1,\\
\hat{x}_{k+1} &= h_k(x_k, g_k), \\
\hat{x}_{i+1}&= h_i(\hat{x}_i, \pi_b(\hat{x}_i))\ \text{ for}\ k+1\leq i \leq K-1,\\
\tilde{x}_2 &= h_1(x_1, g_1),\\
\tilde{x}_{i+1} &= h_i(\tilde{x}_i, \pi_b(\tilde{x}_i))\ \text{for}\ 2\leq i\leq K-1.
\end{align*}
Moreover, we have
\begin{equation}
\eta_k=\frac{r_{k+1}(x_{k+1}, o_{k+1})+R_3-R_4}{r_1(x_1,o_{k+1})+\sum_{i=2}^K r_i(\tilde{x}_i, \pi_b(\tilde{x}_i))},
\label{eqn:Reta}
\end{equation}
where 
\begin{align*}
R_3 &= \sum_{i=k+2}^K r_i(x_i, \pi_b(x_i)),\\
R_4 &= \sum_{i=k+1}^K r_i(\hat{x}_i, \pi_b(\hat{x}_i)),\\
x_{i+1} &= h_i(x_i, o_i) \ \text{for}\ 1\leq i\leq k+1,\\
x_{i+1} &= h_i(x_i, \pi_b(x_i))\ \text{for}\  k+2\leq i\leq K-1,\\
\hat{x}_{k+1} &= h_k(x_k, o_k), \\
\hat{x}_{i+1} &= h_i(\hat{x}_i, \pi_b(\hat{x}_i))\ \text{ for}\ k+1\leq i \leq K-1,\\
\tilde{x}_2 &= h_1(x_1, o_{k+1}),\\
\tilde{x}_{i+1} &= h_i(\tilde{x}_i, \pi_b(\tilde{x}_i))\ \text{for}\ 2\leq i\leq K-1.
\end{align*}
We now show that for rollout, the function $f$ is forward monotone with respect to $G_k$, which implies that $\epsilon_k\leq 1$ and $\beta> 0$ (see Remark~1 in Section~\ref{sc:IV}).

\begin{Theorem}
\label{thm:rollout_monotone}
In rollout, $f(G_{k})\geq f(G_{k-1})$ for $k=1,\ldots,K$.
\end{Theorem}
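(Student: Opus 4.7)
The plan is to exploit the fact that for rollout the string value $f((a_1,\ldots,a_k))$ has a clean interpretation: it equals the total reward accrued by executing $a_1,\ldots,a_k$ at stages $1,\ldots,k$ and then running the base policy $\pi_b$ from stage $k+1$ through stage $K$. Under this interpretation, inserting as the $k$-th action the base-policy action $\pi_b(x_k)$ (where $x_k$ is the state reached after applying $a_1,\ldots,a_{k-1}$) produces the same trajectory as running $a_1,\ldots,a_{k-1}$ and then following $\pi_b$ beginning at stage $k$. That latter trajectory is exactly what defines $f((a_1,\ldots,a_{k-1}))$ in rollout, so the two values coincide.

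Formally, the key identity I would establish is
\[
f((g_1,\ldots,g_{k-1},\pi_b(x_k))) \;=\; f(G_{k-1}) \qquad (k\geq 2),
\]
where $x_k=h_{k-1}(x_{k-1},g_{k-1})$. To prove it, I would expand the left side as $\sum_{i=1}^{k-1} r_i(x_i,g_i) + r_k(x_k,\pi_b(x_k)) + W_{k+1}(x_k,\pi_b(x_k))$, expand $f(G_{k-1})$ as $\sum_{i=1}^{k-1}r_i(x_i,g_i)+W_k(x_{k-1},g_{k-1})$, and unroll each $W$-term using its rollout definition. A straightforward induction shows that the state sequences generated by $\pi_b$ from stage $k$ onward match in the two expressions, so the tails of rewards agree. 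Once the identity is in hand, the theorem for $k\geq 2$ follows at once from the greedy definition of $g_k$: since $g_k\in\argmax_{g\in\Ac} f((g_1,\ldots,g_{k-1},g))$, taking $g=\pi_b(x_k)$ yields
\[
f(G_k) \;\geq\; f((g_1,\ldots,g_{k-1},\pi_b(x_k))) \;=\; f(G_{k-1}).
\]

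For the base case $k=1$, I would simply note that $f(G_0)=f(\varnothing)=0$, while $f(G_1) = r_1(x_1,g_1)+\sum_{i=2}^K r_i(\tilde x_i,\pi_b(\tilde x_i))$ is a sum of values in $\real_+$, so $f(G_1)\geq 0=f(G_0)$. The conceptual content of the theorem is the classical observation that rollout improves on its base policy stage by stage; the main obstacle is purely the bookkeeping needed to align the three state sequences ($x_i$, $\hat x_i$, $\tilde x_i$) used in the excerpt's definitions and verify the key identity cleanly. No monotonicity or submodularity of $f$ is required beyond the nonnegativity already assumed.
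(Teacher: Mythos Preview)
Your proposal is correct and follows essentially the same route as the paper: both arguments expand $f(G_{k-1})$ using the rollout definition of $W_k$ to obtain $\sum_{i=1}^{k-1} r_i(x_i,g_i) + r_k(x_k,\pi_b(x_k)) + W_{k+1}(x_k,\pi_b(x_k))$, and then invoke the greedy maximization at stage $k$ with the particular competitor $g=\pi_b(x_k)$. Your version is slightly more careful in that you isolate the key identity $f((g_1,\ldots,g_{k-1},\pi_b(x_k)))=f(G_{k-1})$ explicitly and treat the base case $k=1$ separately via nonnegativity, whereas the paper's displayed computation tacitly assumes $k\geq 2$.
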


\begin{proof}
We have
\begin{align*}
&f(G_k)-f(G_{k-1})\\
&=f((g_1,\ldots,g_{k-1},g_k))-f((g_1,\ldots,g_{k-1})) \\
&=(\sum_{i=1}^{k-1} r_i(x_i,g_i) + r_k(x_k,g_k) + W_{k+1}(x_k,g_k))- \\
&\quad (\sum_{i=1}^{k-1} r_i(x_i,g_i) + r_k(x_k,\pi_b(x_k))+W_{k+1}(x_{k},\pi_b(x_k)))\\
&= ( r_k(x_k,g_k) + W_{k+1}(x_k,g_k))-\\
&\quad(r_k(x_k,\pi_b(x_k)+W_{k+1}(x_{k},\pi_b(x_k))).
\end{align*}
By (\ref{eqn:5}), we have that 
\[
r_k(x_k,g_k) + W_{k+1}(x_k,g_k)=\max_{g\in\mathcal{A}} \{r_k(x_k,g) + W_{k+1}(x_k,g)\},
\]
which implies that $f(G_k)\geq f(G_{k-1})$. 
\end{proof}

\subsection{Rollout with Optimal Base Policy}

Suppose that the base policy is the optimal policy. In this case, 
the VTG approximation term $W_{k+1}$ is equal to the true VTG.
As pointed out in Section~\ref{sc:III}, the resulting rollout scheme 
is optimal and satisfies Bellman's optimality principle. In this case, 
of course $f(G_K)=f(O_K)$. To illustrate that the bound in Theorem~\ref{thm:bounds} is tight in this case, we will show that $\beta=1$. We do this by showing that $\sum_{i=0}^{K-1} \epsilon_i + \eta_i \leq K$ (see Remark~2 in Section~\ref{sc:IV}). To see this, by (\ref{eqn:cur1}), we have $\epsilon_0=0$ and $\epsilon_k=1$ for $1\leq k\leq K-1$. By (\ref{eqn:cur2}), we have $\eta_0=1$ and $\eta_k=0$ for $1\leq k\leq K-1$. Therefore, $\sum_{i=0}^{K-1}(\epsilon_i+\eta_i)= K$, which implies that $\beta=1$.

\subsection{Myopic Policy}

Consider the special case where $W_{k+1}(\cdot)\equiv 0$ for each $k$. In other words, for each $k$, $g_k \in \argmax_{g\in\Ac} r_k(x_k,g)$. We call this the \emph{myopic policy}. For the myopic policy, we have that $k=0,\ldots, K-1$,
\[
\epsilon_k=1-\frac{r_{k+1}(x_{k+1}, g_{k+1})}{r_1(x_1, g_1)}, 
\]
where $x_{i+1}=h_i(x_i, g_i)$ for $1\leq i\leq k-1$, and
\[
\eta_k=\frac{r_{k+1}(x_{k+1}, o_{k+1})}{r_1(x_1, o_{k+1})},
\]
where $x_{i+1}=h_i(x_i, o_i)$ for $1\leq i\leq k-1$. 
It is clear that because $r_k(\cdot,\cdot)>0$, we have $\epsilon_k<1$, in which case $\beta>0$. In fact, it is easy to check that $f$ is forward monotone with respct to $G_k$ in this case. 

\subsection{Rollout of Myopic Base Policy}

Consider the rollout method where the base policy is the myopic policy defined above. It is well known that the resulting rollout scheme performs at least as well as the myopic policy \cite{BeT97}. Here, we will calculate a bound on the amount by which the rollout scheme outperforms the myopic base policy in terms of $\epsilon_k$ and $\eta_k$. This calculation involves introducing some additional notation (which seems unavoidable).

Let $G_K^M=(g_1^M,\ldots,g_K^M)$ be the myopic strategy and 
$G_K^{RM}=(g_1^{RM},\ldots,g_K^{RM})$ the corresponding rollout strategy.
More specifically, given $g_1^M,\ldots, g_{k-1}^M$, 
\[
g_k^M \in \argmax_{g^M\in\Ac} r_k(x_k^M,g^M) 
\]
where $x_1^M=x_1$ is given and $x_{i+1}^M=h_i(x_i^M, g_i^M)$ for $1\leq i\leq K-1$.
Moreover, the rollout scheme with the myopic base policy is as follows: 
given $g_1^{RM},\ldots, g_{k-1}^{RM}$,
\[
g_k^{RM} \in \argmax_{g^{RM}\in\Ac} \{r_k(x_k^{RM},g^{RM}) +\sum_{i=k+1}^K r_i({x}_i^{RM},\pi_b({x}_i^{RM}))\}
\]
where 
\[
\pi_b({x}_i^{RM}) \in \argmax_{g^{RM}\in \Ac}r_i({x}_i^{RM},g^{RM}),
\]
$x_1^{RM}=x_1$ is given, $x_{i+1}^{RM}=h_i(x_i^{RM}, g_i^{RM})$ for $1\leq i\leq k-1$, and $x_{i+1}^{RM}=h_i(x_i^{RM}, \pi_b(x_i^{RM}))$ for $k\leq i\leq K-1$.

Let $f^M$ and $f^{RM}$ respectively denote the objective functions corresponding to the myopic and rollout (with myopic base policy) strategies.
Then we have that 
\[
f^M((g_1^M,\ldots,g_k^M))=\sum_{i=1}^k r_i(x_i^M,g_i^M)
\]
where $x_{i+1}^M=h_i(x_i^M, g_i^M)$ for $1\leq i\leq k-1$, and $x_1^M=x_1$ is given.
Moreover,
\begin{equation*}
\begin{array}{ll}
f^{RM}((g_1^{RM},\ldots, g_k^{RM})) & \nonumber\\
=\sum_{i=1}^k r_i({x}_i^{RM}, g_i^{RM})+\sum_{i=k+1}^K r_i({x}_i^{RM}, \pi_b({x}_i^{RM}))&\nonumber
\end{array}
\end{equation*}
where ${x}_{i+1}^{RM}=h_i({x}_i^{RM}, g_i^{RM})$ for $1\leq i\leq k$, ${x}_{i+1}^{RM}=h_i({x}_i^{RM}, \pi_b({x}_i^{RM}))$ for $k+1\leq i \leq K-1$, and ${x}_1^{RM}=x_1$ is given.


We claim that $f^{RM}(G_1^{RM})\geq f^M(G_K^M)$. Tos see this, for the myopic policy, we have 
\[
g_k^M \in \argmax_{g^M\in\mathcal{A}}r_k(x_k^M,g^M)
\]
for $k=1,\ldots,K$. For rollout with the myopic base policy, we have 
\begin{align*}
g_1^{RM} & \in \argmax_{g^{RM}\in\mathcal{A}}\{r_1({x}_1^{RM},g^{RM})\\
& \mbox{}+r_2({x}_2^{RM},\pi_b({x}_2^{RM}))\\
& \mbox{}+\cdots+r_K({x}_K^{RM},\pi_b({x}_K^{RM}))\}.
\end{align*}
Because $\pi_b({x}_i^{RM}) \in \argmax_{g^{RM}\in \Ac}r_i({x}_i^{RM},g^{RM})$ and 
$x_1^M={x}_1^{RM}$, we have that 
\begin{align*}
&r_1(x_1^{RM},g_1^{RM})+r_2({x}_2^{RM},\pi_b({x}_2^{RM}))\\
&\mbox{}+\cdots+r_K({x}_K^{RM},\pi_b({x}_K^{RM})) \\
&\geq r_1(x_1^M,g_1^M)+r_2({x}_2^M,g_2^M)\\
&\mbox{}+\cdots+r_K({x}_K^M,g_K^M),
\end{align*}
which means that $f^{RM}(G_1^{RM})\geq f^M(G_K^M)$, as desired.

Combining (\ref{eqn:GK1}), (\ref{eqn:G1O}), and the inequality
$f^{RM}(G_1^{RM})\geq f^M(G_K^M)$, we have 
\begin{align*}
f^{RM}(G_K^{RM})-f^M(G_K^M)
&\geq ( \sum_{i=1}^{K-1}(1-\epsilon_i))f^{RM}(G_1^{RM})\\
&\geq\frac{\sum_{i=1}^{K-1}(1-\epsilon_i)}{\sum_{i=0}^{K-1}(\eta_i)}f^{RM}(O_K),
\end{align*}
which provides a bound on the amount by which the rollout scheme outperforms the myopic base policy.

\section{Conclusion}\label{sc:VI}

We have developed a systematic approach to deriving guaranteed bounds for approximate dynamic programming (ADP) schemes in optimal control problems. The approach is to formulate a string-submodular optimization problem for which the optimal strategy is the optimal control solution, and the greedy strategy is the ADP solution. Using this approach, we have shown that any ADP solution achieves a performance that is at least a factor of $\beta$ of the performance of the optimal control solution (satisfying Bellman's optimality principle). The factor $\beta$ depends on the specific ADP scheme. We have explicitly characterized this dependence and we have illustrated the the applicability of our bounding technique to a few examples of ADP schemes, including the popular rollout method.



\bibliographystyle{IEEEbib}

\end{document}